\def \a{\mathfrak a}
\def \A{{\mathbb A}}
\def \al{\alpha}
\def \C{{\mathbb C}}
\def \CO{\mathcal O}
\def \e{\emph}
\def \eps{\varepsilon}
\def \F{{\mathbb F}}
\def \G{{\mathbb G}}
\def \Ga{\Gamma}
\def \GL{\operatorname{GL}}
\def \Hom{\operatorname{Hom}}
\def \la{\lambda}
\def \N{{\mathbb N}}
\def \om{\omega}
\def \p{\mathfrak p}
\def \R{{\mathbb R}}
\def \Re{\operatorname{Re}}
\def \SL{\operatorname{SL}}
\def \sm{\smallsetminus}
\def \Sp{\operatorname{Sp}}
\def \spec{\operatorname{spec}}
\def \Z{{\mathbb Z}}
\def \({\left(}
\def \){\right)}
\newtheorem{theorem}{Theorem}[section]
\newtheorem{lemma}[theorem]{Lemma}
\newtheorem{corollary}[theorem]{Corollary}
\newtheorem{proposition}[theorem]{Proposition}
\numberwithin{equation}{section} \theoremstyle{definition}
\newtheorem{definition}[theorem]{Definition}
\newtheorem{example}[theorem]{Example}
\begin{document}

\title{Counting and  zeta functions over $\F_1$\\ \ \\ \small
Abh. Math. Sem. Hamburg. Vol. 85, Issue 1, 59-71 (2015)}
\author{Anton Deitmar\footnote{Universit\"at T\"ubingen, Mathematisches Institut,
Auf der Morgenstelle 10, 72076 Tuebingen, Germany. {The first named author stayed at the SFB 878 at M\"unster while this paper was finished. He wants to thank the people there for there hospitality.}}, Shin-ya Koyama\footnote{Department of Biomedical Engineering, Toyo University,
2100 Kujirai, Kawagoe, Saitama, 350-8585, Japan.} \ \& Nobushige Kurokawa\footnote{Department of Mathematics, Tokyo Institute of Technology, 
Oh-okayama, Meguro-ku, Tokyo, 152-8551, Japan.}}

\maketitle

\section*{Introduction}
This paper is dedicated to counting functions and the corresponding zeta functions in $\F_1$-theory.
The first $\F_1$-zeta function has been defined by Christophe Soul\'e in \cite{Soule} for schemes of finite type whose congruence counting function is a polynomial (Soul\'e's condition), which makes it possible to replace the prime $p$ by $p=1$ yielding the $\F_1$-zeta function as limit.
In the paper \cite{DzetaK} it is shown that if a finite type scheme $X$ is defined over $\F_1$, then it satisfies  Soul\'e's condition up to torsion in the structure sheaf.
In the paper \cite{CCabsolutePoint}, Alain Connes and Caterina Consani propose a generalization of Soul\'e's zeta function which incorporates torsion.
The result is a transcendental function which is hard to compute explicitly.
The construction of Connes and Consani applies to counting functions of the form $N(q)=\sum_{j=1}^n c_j(q)q^j$, where each $c_j(q)$ is a periodic function.
The arguments $q$ are prime powers.
The central technical step in \cite{CCabsolutePoint} is a construction of an extension of $c_j(q)$ from integers $q$ to a periodic function on the reals.
There then occur natural constraints which lead to the transcendental construction.
In the first section of this paper we propose a different path.
As it turns out, the functions $n\mapsto c_j(p^n)$ are also periodic for any prime $p$. As the period is independent of $p$, one can extend these functions in a unified way, paving the path for taking the limit $p\to 1$.
In this way one gets a calculus of $\F_1$-zeta functions incorporating torsion, yielding  rational functions, which satisfy natural functional equations.

In the second section we consider Soul\'e zeta functions for reductive groups and compute their functional equations, see \cite{Lor}.
In the third section we investigate the correlation between functional equations of the zeta function and its counting function, the latter considered as a function of real arguments.
As the zeta function doesn't determine a real argument counting function, one needs restrictions, which in this case are given by the consideration of finite sums of real powers.
The decisive technical tool we put forward here is a new type of regularization of the zeta integral through a two variable zeta integral.
This regularization is an adaptation of the known method of zeta regularization for determinants of elliptic differential operators as in \cites{RS,DHoker,Sarnak}.

\section{Refined Soul\'e zeta functions}\label{sec1}
Let $X$ be a scheme of finite type over $\Z$.
For a prime $p$ one sets after Weil,
$$ 
Z_X(p,T)=\exp\(\sum_{n=1}^\infty\frac{T^n}n\# X(\F_{p^n})\),
$$
where $\F_{p^n}$ stands for the field of $p^n$ elements.
This is the local zeta function of $X$.
The global is
$$
\zeta_{X/\Z}(s)=\prod_p Z_X(p,p^{-s}),
$$
where the product runs over all primes $p$.
The scheme $X$ is said to satisfy \e{Soul\'e's condition},  (see \cite{Soule}), if there exists a polynomial $N(x)$ with integer coefficients such that $\# X(\F_{p^n})=N(p^n)$ holds for every prime $p$ and every $n\in\N$.
Then $Z_X(p,p^{-s})$ is a rational function in $p$ and $p^{-s}$. The pole order at $p=1$ is $N(1)$.
One may then define the \e{Soul\'e zeta function} as
$$
\zeta_{X/\F_1}(s)=\lim_{p\to 1}\Z_X(p,p^{-s})(p-1)^{N(1)}.
$$
One computes that if $N(x)=a_0+\dots+a_nx^n$, then
$$
\zeta_{X/\F_1}(s)=s^{-a_0}(s-1)^{-a_1}\cdots (s-n)^{-a_n}.
$$
In the paper \cite{DzetaK} it is shown that if a finite type scheme $X$ is defined over $\F_1$, then it satisfies the Soul\'e condition up to torsion in the structure sheaf.
In the paper \cite{CCabsolutePoint}, Alain Connes and Caterina Consani propose a generalization of Soul\'e's zeta function which incorporates torsion.
The result is a transcendental function which cannot be computed directly.
The central technical step in \cite{CCabsolutePoint} is a construction for an extension to $\R$ of a periodic function on $\Z$.
This is applied to a counting function which occurs after taking the limit $p\to 1$.

In this section we reverse the order of steps in that we first extend and then take the limit $p\to 1$.
This leaves us free from technical constraints and allows us to choose the simplest and most natural way of extension: via the Fourier series.
With this choice one gets a calculus of zeta functions which
\begin{itemize}
\item extends the torsion-free case,
\item is expressible by Betti numbers exactly as in the torsion-free case, and
\item satisfies the same functional equation as in the torsion-free case. 
\end{itemize}

In order to set the stage and introduce notation, we briefly repeat the basic notions of $\F_1$-schemes of \cite{DF1}, except that, for better distinction from other concepts, we now call them \e{monoid schemes}.

An \e{ideal} in a commutative monoid $A$ is a subset $\a\subset A$ with $A\a\subset\a$.
A \e{prime ideal} is an ideal $\p$ such that $S_\p= A\sm\p$ is a submonoid.
The \e{spectrum} is the set $\spec(A)$ of all prime ideals with the topology generated by all sets of the form $D(f)=\{ \p:f\notin \p\}$.
It carries a canonical sheaf $\CO_{A}$ of monoids with stalks $\CO_{A,\p}=A_\p=S_\p^{-1} A$.
The pair $(\spec A,\CO_A)$ is then called an \e{affine monoid scheme}.
A  \e{monoid scheme} is a topological space $X$ with a sheaf $\CO_X$ of monoids which is locally affine.

An affine scheme is given by a monoid.
Its \e{$\Z$-lift} is given by the corresponding monoidal ring.
This procedure is compatible with gluing and thus extends to schemes to give the the \e{base change} $X\mapsto X_\Z$ that assigns a scheme over $\Z$ to any monoidal scheme $X$.
A $\Z$-scheme isomorphic to the lift of a monoidal scheme is said to be \e{defined over $\F_1$}.
In \cite{toric} it is shown that a variety is defined over $\F_1$ if and only if it is a toric variety.

A monoidal scheme $X$ is said to be of \e{finite type}, if there exists a finite affine covering $X=\bigcup_{j=1}^nU_j$ such that each monoid $\CO_X(U_j)$ is finitely generated.
In this case, $X$ is a finite set. We assume $X$ to be of finite type from now on.
For a given natural number $m$ let $F_m$ denote the monoid $\mu_m\cup\{ 0\}$, where $\mu_m$ is the cyclic group of order $m$ and $a\cdot 0=0$ for every $a$.
By Theorem 1.1 in \cite{DF1} one has $X_\Z(\F_{q})\cong X(F_{q-1})$ for every prime power $q$ and by Lemma 1 of \cite{DKK},
$$
X(F_m)\cong\coprod_{x\in X}\Hom(\CO_{X,x}^\times,\mu_m)
$$
for every $m\in\N$.
Now $\CO_{X,x}^\times$ is a finitely generated abelian group, hence it is a product of cyclic groups $\CO_{X,x}^\times\cong C^{R(x)}\times \mu_{t_{x,1}}\times\dots\times\mu_{t_{x,k}}$, where $C$ is the infinite cyclic group, $R(x)$ is the rank of the group $\CO_{X,x}^\times$, and the numbers $t_{x,j}$ can be assumed to be prime powers.
For simplicity of notation, we use a single $k$ here, so some of the groups $\mu_{t}$ may be trivial.
Writing $m=q-1$ it turns out
$$
\# X_\Z(\F_q)=\sum_{x\in X}\prod_{j=1}^k\underbrace{\#\Hom(\mu_{t_{x,j}},\mu_m)}_{=\gcd(t_{x,j},m)} m^{R(x)}
$$
We conclude
\begin{align*}
Z_{X_\Z}(p,p^{-s})&=\exp\(\sum_{n=1}^\infty \frac{p^{-ns}}n \#X_\Z(\F_{p^n})\)\\
&=\exp\(\sum_{n=1}^\infty \frac{p^{-ns}}n 
\sum_{x\in X}(p^n-1)^{R(x)}\prod_{j=1}^k\gcd(t_{x,j},p^n-1)
\).
\end{align*}
We now argue that the function $n\mapsto \gcd(t_{x,j},p^n-1)$ is periodic.
For this note that $\gcd(t,p^n-1)$ only depends on the residue class of $p^n-1$ modulo $t$.
Assume that $t=q^k$ is a prime power.
If $q\ne p$, then $p^n$ is a unit modulo $t=q^k$ and the map $n\mapsto p^n-1\mod(t)$ is periodic of period 
$\phi(t)=\#(\Z/t\Z)^\times$.
If $p=q$, then the function $n\mapsto \gcd(t,p^n-1)$ is constantly equal to $1$.

We have that in any case the map $n\mapsto\gcd(t_{x,j},p^n-1)$ is periodic of period $\phi(t_{x,j})$.
Let $n_0$ be the least common multiple of all the periods $\phi(t_{x,j})$ as $x$ and $j$ vary, and set $\xi=e^{2\pi i/n_0}$.
Note that $n_0$ does not depend on $p$.
For $\nu=1,\dots n_0$ let
$$
c_{x,j,\nu}(p)=\frac1{n_0}\sum_{n=1}^{n_0}\gcd(t_{x,j},p^n-1)\xi^{-n\nu}
$$
be the Fourier coefficient, then
$$
\gcd(t_{x,j},p^n-1)=\sum_{\nu=1}^{n_0} c_{x,j,\nu}(p)\xi^{n\nu}
$$
is the Fourier series in the variable $n$.
So $Z_{X_\Z}(p,p^{-s})$ equals
$$
\exp\(\sum_{n=1}^\infty\frac{p^{-ns}}n
\sum_{x\in X}(p^n-1)^{R(x)}\prod_{j=1}^k\(\sum_{\nu=1}^{n_0}c_{x,j,\nu}(p)\xi^{n\nu}\)\).
$$
We can now pull out the sum over $x\in X$, so that it becomes a product, replace $(p^n-1)^{R(x)}$ with the sum $\sum_{r=0}^{R(x)}\left(\begin{array}{c}R(x) \\ r\end{array}\right)p^{nr}(-1)^{R(x)-r}$ and pull out the sum again and turn the integer factors into exponents.
After that, we can multiply out the product over $j$ and repeat the procedure.
We end up with a product of exponential factors that come with exponents which involve products of the coefficients $c_{x,j,\nu}(p)$.
We ultimately want to let $p$ tend to 1 and we know how to do that with all terms except for $c_{x,j,\nu}(p)$. 
In the definition of this coefficient  there occurs the factor $\gcd(t,p^n-1)$. The map $m\mapsto \gcd(t,m)$  is periodic  of period $t$, hence it has a Fourier expansion
$$
\gcd(t,p^n-1)=\sum_{\alpha=1}^t d_\alpha e^{2\pi i\alpha(p^n-1)/t},
$$
which tends, as $p\to 1$, to 
\begin{align*}
\sum_{\alpha=1}^t d_\alpha
&= \sum_{\al=1}^t\frac1t\sum_{j=1}^t\gcd(t,j)e^{-2\pi ij\al/t}\\
&= \frac1t\sum_{j=1}^t\gcd(t,j)\sum_{\al=1}^t e^{-2\pi ij\al/t}=\frac1t\sum_{j=1}^t\gcd(t,j)
\times\left\{\begin{array}{cc} t& j=t,\\ 0&j\ne t.\end{array}\right\}\\
&= t.
\end{align*}
Hence $c_{x,j,\nu}(p)$ tends, as $p\to 1$, to
$$
\frac1{n_0}\sum_{n=1}^{n_0}t_{x,j}\xi^{-n\nu}=\begin{cases}t_{x,j}&\nu=n_0,\\ 0&\nu\ne n_0.\end{cases}
$$
This means that, as we intend to let $p$ tend to 1, we can replace the coefficient $c_{x,j,\nu}(p)$ with this result and consider
$$
\tilde Z_{X_\Z}(p,p^{-s})
=\exp\(\sum_{n=1}^\infty\frac{p^{-ns}}n
\sum_{x\in X}(p^n-1)^{R(x)}\prod_{j=1}^kt_{x,j}\).
$$
The number $T(x)=\prod_{j=1}^k t_{x,j}$ equals the cardinality of the torsion group of $\CO_{X,x}^\times$.
We arrive at
$$
\tilde Z_{X_\Z}(p,p^{-s})
=\prod_{x\in X}\prod_{r=0}^{R(x)}(1-p^{r-s})^{T(x)\binom{R(x)}r (-1)^{R(x)-r-1}}.
$$
The pole order at $p=1$ is $N=\sum_{x\in X}\sum_{r=0}^{R(x)}T(x)\binom{R(x)}r (-1)^{r-R(x)}$ and
\begin{align*}
\zeta_{X/\F_1}(s)
&=\lim_{p\to 1}Z_{X_\Z}(p,p^{-s})(p-1)^N\\
&=\lim_{p\to 1}\tilde Z_{X_\Z}(p,p^{-s})(p-1)^N\\
&=\prod_{x\in X}\prod_{r=0}^{R(x)}(s-r)^{T(x)\binom{R(x)}r (-1)^{R(x)-r-1}}.
\end{align*}
Let $R$ be the maximal value of $R(x)$, then we can rewrite this as
$$
\zeta_{X/\F_1}(s)
=\prod_{r=0}^{R}(s-r)^{\sum_{x\in X}T(x)\binom{R(x)}r (-1)^{R(x)-r-1}}.
$$
We have shown the first part of the following theorem.

\begin{theorem}\label{thm1.1}
Let $X$ be a monoid scheme of finite type.
For each $x\in X$ let $R(x)$ denote the rank of the finitely generated abelian group $\CO_{X,x}^\times$, let $T(x)$ be the order of its torsion group and write $R$ for the maximum of all $R(x)$ for $x\in X$.
With the normalization that a periodic function on $\Z$ is extended to $\R$ via its Fourier series, one gets
$$
\zeta_{X/\F_1}(s)
=\prod_{r=0}^{R}(s-r)^{\sum_{x\in X}T(x)\binom{R(x)}r (-1)^{R(x)-r-1}}.
$$
If $X_\Z$ is a smooth projective variety of dimension $d$, the zeta function satisfies the functional equation
$$
\zeta_{X/\F_1}(d-s)=(-1)^\chi\zeta_{X/\F_1}(s),
$$
where $\chi$ is the Euler characteristic of $X_\Z$.
\end{theorem}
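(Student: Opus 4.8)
The plan is to reduce the functional equation to two properties of the exponents
$$
e_r=\sum_{x\in X}T(x)\binom{R(x)}r(-1)^{R(x)-r-1},
$$
namely: (i) $e_r=e_{d-r}$ for all $r$, with $e_r=0$ for $r\notin\{0,\dots,d\}$; and (ii) $\sum_{r=0}^d e_r=-\chi$. Granting these --- and writing $R$ for the maximum of the $R(x)$, which we show below equals $d$ --- the identity $d-s-r=-\big(s-(d-r)\big)$ gives
$$
\zeta_{X/\F_1}(d-s)=\prod_{r=0}^d\big(d-s-r\big)^{e_r}
=(-1)^{\sum_{r=0}^d e_r}\prod_{r=0}^d\big(s-(d-r)\big)^{e_r};
$$
re-indexing the product by $k=d-r$ and using (i) turns it into $\prod_{k=0}^d(s-k)^{e_k}=\zeta_{X/\F_1}(s)$, while by (ii) the prefactor equals $(-1)^{-\chi}=(-1)^\chi$, which is the claim.

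To prove (i) and (ii) I would pass to the generating polynomial of the exponents. Expanding $(q-1)^{R(x)}$ by the binomial theorem shows that $-e_r$ is the coefficient of $q^r$ in the torsion-corrected counting polynomial $\tilde N(q)=\sum_{x\in X}T(x)(q-1)^{R(x)}$, the numerator of $\tilde Z_{X_\Z}(p,p^{-s})$ constructed above. Hence (i) is equivalent to the palindromy $\tilde N(q)=q^d\tilde N(1/q)$, and since $\sum_{r}e_r=-\tilde N(1)=-\sum_{x:R(x)=0}T(x)$ (the term $(q-1)^{R(x)}$ vanishing at $q=1$ unless $R(x)=0$), property (ii) amounts to $\tilde N(1)=\chi$.

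Both statements about $\tilde N$ should come from identifying it with a cohomological invariant of $X_\Z$. Because $X_\Z$ is defined over $\F_1$, it decomposes into finitely many locally closed pieces indexed by $x\in X$, the piece at $x$ being $\G_m^{R(x)}\times G_x$ with $G_x$ a finite group scheme of order $T(x)$ (the Cartier dual of the torsion subgroup of $\CO_{X,x}^\times$); this is the scheme-theoretic version of the decomposition $X_\Z(\F_q)\cong X(F_{q-1})\cong\coprod_x\Hom(\CO_{X,x}^\times,\mu_{q-1})$ recalled above. Working over $\C$, so that each $G_x$ consists of $T(x)$ reduced points, additivity of the $E$-polynomial over such decompositions and its multiplicativity on products give $E(X_\Z;u,v)=\sum_{x\in X}T(x)(uv-1)^{R(x)}$; in particular $E(X_\Z)$ depends only on $uv$, equals $\tilde N(uv)$, and has $uv$-degree $\dim X_\Z=d$, so $R=d$. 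Since $X_\Z$ is smooth and projective, $E(X_\Z;u,v)=\sum_{a,b}(-1)^{a+b}h^{a,b}u^av^b$; the fact that this is a polynomial in $uv$ alone forces $h^{a,b}=0$ for $a\ne b$, so the odd Betti numbers of $X_\Z$ vanish and $\tilde N(q)=\sum_{a=0}^d h^{a,a}q^a$ with $h^{a,a}=\dim H^{2a}(X_\Z)$. Poincar\'e duality $h^{a,a}=h^{d-a,d-a}$ then yields the palindromy $\tilde N(q)=q^d\tilde N(1/q)$, proving (i), and evaluation at $q=1$ gives $\tilde N(1)=\sum_a\dim H^{2a}(X_\Z)=\sum_k(-1)^k\dim H^k(X_\Z)=\chi$, proving (ii). I expect the main obstacle to be exactly this identification: one needs that an $\F_1$-scheme carries the torus-orbit decomposition with the stated finite pieces, and one must read $\chi$ as the topological Euler characteristic of $X_\Z(\C)$ --- equivalently, work away from the primes dividing the $T(x)$, so that the $G_x$ remain \'etale and contribute $T(x)$ to the Euler characteristic. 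When there is no torsion all $T(x)=1$ and this step is the classical fact that the counting polynomial of a smooth projective toric variety is palindromic with value $\chi$ at $1$.
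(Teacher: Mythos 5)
Your proof is correct in its logical skeleton but takes a genuinely different route from the paper. Both proofs reduce the functional equation to the two identities $e_r=e_{d-r}$ and $\sum_r e_r=-\chi$ and both ultimately rest on Poincar\'e duality, but they arrive at the identification of the exponents $-e_r$ with the Betti numbers $b_{2r}$ by very different means. The paper stays entirely on the point-counting side: it compares the factorization of $Z_{X_\Z}(p,T)$ (all inverse roots have absolute value $p^r\cdot|\xi^{\sum(\nu)}|=p^r$ with integer $r$) against Deligne's $Z(p,T)=\prod_\nu P_\nu(T)^{(-1)^{\nu+1}}$ to conclude $b_{2r+1}=0$ and to express $b_{2r}$ through the Fourier coefficients $c_x^{(\nu)}(p)$, then invokes Dirichlet's theorem to choose a prime $p\equiv1\pmod{t_{x,j}}$ for which these coefficients collapse to $T(x)$, yielding $b_{2l}=\sum_x(-1)^{l+R(x)}\binom{R(x)}{l}T(x)=-e_l$. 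Your proof instead passes to geometry: it uses a scheme-theoretic stratification of $X_\Z$ into torus-orbits $\G_m^{R(x)}\times G_x$, additivity and multiplicativity of the $E$-polynomial to get $E(X_\Z;u,v)=\tilde N(uv)$, and the Hodge-theoretic consequence $h^{a,b}=0$ for $a\ne b$ to read off the Betti numbers. The payoff of your route is a cleaner conceptual picture (a Hodge-theoretic explanation for the vanishing of odd cohomology and for palindromy); the payoff of the paper's route is that it needs no stratification and works purely with the counting function over finite fields.

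The one substantive gap, which you correctly flag as ``the main obstacle,'' is the locally closed decomposition of $X_\Z$ into pieces $\G_m^{R(x)}\times G_x$. The paper nowhere establishes or uses such a stratification: the displayed identity $X(F_m)\cong\coprod_x\Hom(\CO_{X,x}^\times,\mu_m)$ is a statement about $\F_q$-points, not a decomposition of $X_\Z$ as a scheme, and upgrading the former to the latter is a genuine extra step (essentially the orbit-cone correspondence for toric varieties together with the identification of $\F_1$-schemes with toric varieties from \cite{toric}, plus some care with the residual finite group schemes away from the primes dividing $T(x)$). If you supply a reference or proof for that decomposition and for the behaviour of the $E$-polynomial under it, your argument goes through; absent that, the paper's Dirichlet-plus-Deligne argument is the more economical and self-contained proof.
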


The functional equation is the same as in the torsion-free case \cite{Lor}.

\begin{proof}
It remains to show the functional equation.
By Deligne's proof of the Weil conjectures \cite{Deligne} we have for every prime $p$ that $Z_{X_\Z}(p,T)$ equals the product $\prod_{\nu=0}^{2d}P_\nu(T)^{(-1)^{\nu+1}}$ with $P_\nu(T)=\prod_{j=1}^{b_\nu}(1-\al_{\nu,j}T)$, where $|\al_{\nu,j}|=p^{\nu/2}$ and $b_{2d-\nu}=b_\nu$.
For any choice $(\nu)$ of $1\le \nu_1,\dots,\nu_{k}\le n_0$ we set $c_x^{(\nu)}(p)=\prod_{j=1}^kc_{x,j,\nu_j}(p)$.
With this notation,
$$
\prod_{j=1}^k\(\sum_{\nu=1}^k c_{x,j,\nu}(p)\xi^{n\nu}\)
=\sum_{(\nu)}c_x^{(\nu)}(p)\xi^{n\sum(\nu)},
$$
where $\sum(\nu)=\sum_{j=1}^k\nu_j$.
So we get that $Z_{X_\Z}(p,T)$ equals
$$
\prod_{x\in X}\prod_{r=0}^{R(x)}\prod_{(\nu)}\(1-p^r\xi^{\sum(\nu)}T\)^{(-1)^{R(x)-r-1}\binom{R(x)}rc_x^{(\nu)}(p)}.
$$
The factor $p^r\xi^{\sum(\nu)}$ has absolute value $p^r$.
Comparing this with Deligne's results we see that $b_{2r+1}=0$ and
$$
b_{2r}=\sum_{x\in X}\sum_{(\nu)}(-1)^{R(x)-r}\binom{R(x)}rc_x^{(\nu)}(p),
$$
so the right hand side does not depend on $p$.
Recall that
$$
c_{x,j,\nu}(p)=\frac1{n_0}\sum_{n=1}^{n_0}\gcd(t_{x,j},p^n-1)\xi^{-n\nu}.
$$
By Dirichlet's prime number theorem there exists a prime $p$ such that $p\equiv 1\mod(t_{x,j})$ for all $x,j$.
For such $p$ one gets
$$
c_{x,j,\nu}(p)=\begin{cases}t_{x,j}&\nu=n_0,\\
0&\nu\ne n_0.\end{cases}
$$
and so
$$
c_x^{(\nu)}(p)=\begin{cases}T(x)&\nu_1=\dots=\nu_k=n_0,\\
0&\text{otherwise}.\end{cases}
$$
Which means that
$$
b_{2l}=\sum_{x\in X}(-1)^{l+R(x)}\binom{R(x)}lT(x),
$$
so that we have
$$
\zeta_{X/\F_1}(s)=\prod_{l=0}^d(s-l)^{-b_{2l}}.
$$
As $b_{2l}=b_{2d-2l}$, the claimed functional equation follows from this.
\end{proof}

\begin{corollary}
If $X_\Z$ is a smooth projective variety, then for every prime $p$ one has the functional equation
$$
Z(p,\frac1{p^dT})= (-1)^\chi p^{d\chi/2}T^\chi Z(p,T).
$$
\end{corollary}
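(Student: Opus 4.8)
The plan is to run the substitution $T\mapsto 1/(p^dT)$ through the two ingredients already used in the proof of Theorem~\ref{thm1.1}: Deligne's factorization $Z(p,T)=\prod_{\nu=0}^{2d}P_\nu(T)^{(-1)^{\nu+1}}$ with $P_\nu(T)=\prod_{j=1}^{b_\nu}(1-\al_{\nu,j}T)$, $|\al_{\nu,j}|=p^{\nu/2}$, $b_{2d-\nu}=b_\nu$, together with Poincar\'e duality. The first thing I would record is that, since $X_\Z$ is the lift of a monoid scheme, it is a toric variety (by \cite{toric}), so its cohomology is concentrated in even degrees and is of Tate type: Frobenius acts on $H^{2r}$ by the scalar $p^r$. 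Equivalently $P_{2r+1}=1$ and $P_{2r}(T)=(1-p^rT)^{b_{2r}}$, whence, for every prime $p$,
$$
Z(p,T)=\prod_{r=0}^{d}(1-p^rT)^{-b_{2r}} .
$$

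Granting this, the rest is a short computation parallel to the end of the proof of Theorem~\ref{thm1.1}. Using $1-\dfrac{p^r}{p^dT}=-p^{\,r-d}T^{-1}\bigl(1-p^{\,d-r}T\bigr)$ one gets
$$
Z\!\left(p,\tfrac{1}{p^dT}\right)=\prod_{r=0}^{d}(-1)^{-b_{2r}}\,p^{(d-r)b_{2r}}\,T^{b_{2r}}\,\bigl(1-p^{\,d-r}T\bigr)^{-b_{2r}} .
$$
Re-indexing the last factor by $m=d-r$ and invoking $b_{2r}=b_{2(d-r)}$ turns $\prod_r(1-p^{\,d-r}T)^{-b_{2r}}$ back into $Z(p,T)$. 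For the constants I would use $\chi=\sum_{r}b_{2r}$ (odd Betti numbers vanishing) to obtain $\prod_r(-1)^{-b_{2r}}=(-1)^\chi$ and $\prod_r T^{b_{2r}}=T^{\chi}$, and the symmetry $b_{2r}=b_{2(d-r)}$ once more, which gives $\sum_r(d-r)b_{2r}=\sum_r rb_{2r}$, so that each side equals $\tfrac12\,d\chi$ and $\prod_r p^{(d-r)b_{2r}}=p^{d\chi/2}$. Assembling these yields exactly $Z(p,1/(p^dT))=(-1)^\chi p^{d\chi/2}T^\chi Z(p,T)$.

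The part I expect to be the real content is the first displayed formula $Z(p,T)=\prod_r(1-p^rT)^{-b_{2r}}$, i.e.\ pinning down that Frobenius acts on the even cohomology by a \emph{positive} power of $p$. If one uses Poincar\'e duality only in the bare form $P_\nu(1/(p^dT))=(-1)^{b_\nu}D_\nu\,p^{-db_\nu}T^{-b_\nu}P_{2d-\nu}(T)$, with $D_\nu=\prod_j\al_{\nu,j}$ and $D_\nu D_{2d-\nu}=p^{db_\nu}$, then taking the alternating product over $\nu$ and re-indexing $\nu\leftrightarrow 2d-\nu$ (which fixes $(-1)^{\nu+1}$ since $2d$ is even) only gives $Z(p,1/(p^dT))=\eps\,(-1)^\chi p^{d\chi/2}T^\chi Z(p,T)$ with $\eps=p^{d\chi/2}\prod_\nu D_\nu^{(-1)^{\nu+1}}\in\{\pm1\}$; the sign $\eps$ is governed by whether $-p^{d/2}$ occurs among the Frobenius eigenvalues on the middle cohomology $H^d$. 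It is precisely the toric (Tate type) geometry of $X_\Z$ that rules this out and forces $\eps=+1$, so the substance of the argument is to justify invoking that geometry here — which is legitimate because $X_\Z$, being defined over $\F_1$, is toric.
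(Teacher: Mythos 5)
Your argument is correct, and it takes a genuinely different route from the paper's. The paper starts from Deligne's general functional equation $Z(p,1/(p^dT))=\pm p^{d\chi/2}T^\chi Z(p,T)$ with sign $(-1)^{m+b_d}$ (for $d$ even), where $m$ is the multiplicity of the eigenvalue $-p^{d/2}$; it then observes from the proof of Theorem~\ref{thm1.1} that $m$ is independent of $p$, so the sign $\eps$ is a single constant, and passes to the limit $p\to1$ to get $\zeta_{X/\F_1}(d-s)=\eps\,\zeta_{X/\F_1}(s)$, which is compared against the already-proved functional equation $\zeta_{X/\F_1}(d-s)=(-1)^\chi\zeta_{X/\F_1}(s)$ to force $\eps=(-1)^\chi$. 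In other words, the paper pins down the local sign by feeding it back through the $\F_1$-zeta function. You instead pin down the sign \emph{geometrically}: since $X_\Z$ is toric, its \'etale cohomology is of Tate type, Frobenius acts on $H^{2r}$ by $p^r$, so $P_{2r+1}=1$, $P_{2r}(T)=(1-p^rT)^{b_{2r}}$, and the functional equation with the exact sign $(-1)^\chi$ falls out of a direct computation using $b_{2r}=b_{2(d-r)}$. Both arguments are sound; yours is self-contained at the level of the local zeta function and does not need to round-trip through the $\F_1$-limit, at the cost of invoking the (standard, but nontrivial) fact that smooth projective toric varieties have cohomology generated by algebraic cycles, while the paper's route reuses its own Theorem~\ref{thm1.1} and needs only Deligne's black-box form of the functional equation. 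Your closing remark correctly identifies the crux — the sign is governed by whether $-p^{d/2}$ appears on $H^d$, and the Tate-type property rules that out — which is exactly what the paper's $p$-independence argument and limit comparison achieve by other means.
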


This corollary can of course also be proved directly, i.e., without use of $\F_1$-theory.

\begin{proof}
Deligne has proved that
$
Z(p,\frac1{p^dT})= \pm p^{d\chi/2}T^\chi Z(p,T),
$
where the sign is $+$ if $d$ is odd and $(-1)^{m+b_d}$ if $d$ is even, where $m$ is the multiplicity of the eigenvalue $-p^{d/2}$.
From the proof of Theorem \ref{thm1.1} we deduce that $m$ does not depend on $p$ and therefore the sign in the functional equation is the same for all $p$, i.e., replacing $T$ with $p^{-s}$ we see that there exists $\eps\in\{1,-1\}$ such that
$
Z(p,p^{s-d})= \eps p^{d\chi/2}p^{-s\chi} Z(p,p^{-s}),
$
and from this $\zeta_{X/\F_1}(d-s)=\eps\zeta_{X/\F_1}(s)$, so by the theorem it follows $\eps=(-1)^\chi$, whence the corollary.
\end{proof}

\section{Reductive groups over $\F_1$}
An example of varieties which satisfy Soul\'e's condition, but are not defined over $\F_1$, are split reductive groups like $\GL_n,\SL_n,\Sp_{2n}$ or the split quadratic groups.
In this section, we fix a misprint in the paper \cite{Lor}. For the convenience of the reader, we repeat the notation and principal assertions of the latter.

\begin{proposition}\label{prop2.1}
Let $G$ denote a split reductive group over $\Z$.
Then $G$ satisfies Soul\'e's condition.
The conting polynomial $N_G(q)$ satisfies the functional equation
$$
N_G\(\frac1q\)=(-1)^rq^{-d-N}N_G(q),
$$
or, equivalently, the $\F_1$-zeta function $\zeta_G(s)$ satisfies the functional equation
$$
\zeta_{G/\F_1}(d+p-s)=(-1)^\chi\zeta_{G/\F_1}(s)^{(-1)^r},
$$
where $d$ is the dimension, $r$ the rank of $G$ and $p=(d-r)/2$ is the number of positive roots of $G$.
\end{proposition}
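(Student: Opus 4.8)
The plan is to reduce the statement to the classical order formula for split reductive groups and then transport the resulting functional equation of the counting polynomial to the $\F_1$-zeta function through the dictionary recalled at the beginning of Section~\ref{sec1}. First I would invoke the order formula of Chevalley and Steinberg (cf.\ \cite{Lor}): if $G$ is split reductive of rank $r$ and dimension $d$, and $d_1,\dots,d_r$ are the degrees of the fundamental invariants of its Weyl group acting on the $r$-dimensional cocharacter space, then
$$
\#G(\F_q)=q^{p}\prod_{i=1}^r(q^{d_i}-1)
$$
for every prime power $q$, where $p=(d-r)/2$ is the number of positive roots. This exhibits $N_G(q)=q^{p}\prod_{i=1}^r(q^{d_i}-1)$ as a polynomial in $q$ with integer coefficients, so $G$ satisfies Soul\'e's condition. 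I would also record the identity $\sum_{i=1}^r d_i=p+r$, equivalently $\deg N_G=d$, which is just the well-known statement $\sum_i(d_i-1)=p$ that the exponents of the Weyl group sum to the number of positive roots.

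Second, I would prove the functional equation of $N_G$ by direct substitution. Using $\prod_i(q^{d_i}-1)=q^{-p}N_G(q)$ one computes
$$
N_G(1/q)=q^{-p}\prod_{i=1}^r(q^{-d_i}-1)=(-1)^r\,q^{-p-\sum_i d_i}\prod_{i=1}^r(q^{d_i}-1)=(-1)^r\,q^{-2p-\sum_i d_i}N_G(q),
$$
and since $2p+\sum_i d_i=2p+(p+r)=3p+r=d+p$ this is $N_G(1/q)=(-1)^r q^{-(d+p)}N_G(q)$, which is the asserted first functional equation, the $N$ appearing there being the number $p$ of positive roots.

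Third, I would pass to the zeta side using the formula $\zeta_{X/\F_1}(s)=\prod_j(s-j)^{-a_j}$, valid whenever the counting function $\#X(\F_q)=\sum_j a_jq^j$ is a polynomial. Two elementary remarks about the operation $N(q)=\sum_j a_jq^j\mapsto\prod_j(s-j)^{-a_j}$ are needed: (i) replacing $N$ by $(-1)^rN$ raises this product to the power $(-1)^r$; and (ii) if $\deg N\le m$, then $M(q):=q^mN(1/q)=\sum_j a_jq^{m-j}$ has associated product $\prod_j(s-m+j)^{-a_j}$, and since $s-m+j=-(m-s-j)$, pulling a sign out of each of these $N(1)=\sum_j a_j$ terms shows it equals $(-1)^{N(1)}$ times the product for $N$ evaluated at $m-s$. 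By the second step $(-1)^rN_G(q)=q^{d+p}N_G(1/q)$; since the product associated to the left side is $\zeta_{G/\F_1}(s)^{(-1)^r}$ by (i) and the product associated to the right side is $(-1)^{N_G(1)}\zeta_{G/\F_1}(d+p-s)$ by (ii) with $m=d+p$, we obtain
$$
\zeta_{G/\F_1}(d+p-s)=(-1)^{N_G(1)}\,\zeta_{G/\F_1}(s)^{(-1)^r}
$$
(the sign squares to $1$, so it may be carried to either side). It remains to identify $(-1)^{N_G(1)}$ with $(-1)^\chi$: if $r\ge1$ then some factor $q^{d_i}-1$ vanishes at $q=1$, so $N_G(1)=0$, while if $r=0$ then $G$ is trivial and $N_G(1)=1$; in both cases $N_G(1)=\chi$, since a complex reductive Lie group of positive dimension deformation retracts onto a positive-dimensional compact connected Lie group and hence has Euler characteristic $0$, whereas the trivial group has $\chi=1$. (This is the general fact, read off from the Grothendieck--Lefschetz trace formula, that the value at $q=1$ of a counting polynomial is the compactly supported Euler characteristic.) Because $\zeta_{G/\F_1}$ recovers the coefficients via $a_j=-\operatorname{ord}_{s=j}\zeta_{G/\F_1}(s)$, the two functional equations in the proposition are genuinely equivalent and each step above reverses.

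Since the proposition only corrects a misprint in \cite{Lor}, it carries no conceptual obstacle; the real work is bookkeeping. The step most apt to go wrong --- and presumably where the misprint sat --- is the sign transport in the third paragraph: one must keep separate the exponent $(-1)^r$, which appears because the defining product of $\zeta_{G/\F_1}$ gets raised to the power $(-1)^r$, from the global sign $(-1)^{N_G(1)}=(-1)^\chi$, which is produced by the reflection $s\mapsto(d+p)-s$. The other place demanding care is quoting the classical inputs correctly, namely Steinberg's order formula in the reductive (not merely semisimple) setting and the degree identity $\sum_i d_i=p+r$.
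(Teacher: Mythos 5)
Your proof is correct, and it takes a genuinely different route from the paper. The paper factors the counting polynomial via the Borel subgroup, $N_G(q)=N_B(q)\,N_{G/B}(q)=(q-1)^r q^p\sum_{l=0}^p b_{2l}q^l$, and then derives the palindromic relation $a_k=(-1)^r a_{d+p-k}$ on coefficients by combining Poincar\'e duality for the flag variety $G/B$ (which gives $b_{2l}=b_{2(p-l)}$) with the binomial symmetry $\binom rj=\binom r{r-j}$. You instead invoke the closed Steinberg order formula $N_G(q)=q^p\prod_{i=1}^r(q^{d_i}-1)$ together with the exponent identity $\sum_i(d_i-1)=p$, and derive the functional equation $N_G(1/q)=(-1)^r q^{-(d+p)}N_G(q)$ by one line of algebra; these two sets of inputs are of course equivalent in content (the Poincar\'e polynomial of $G/B$ is $\prod_i(1+q+\dots+q^{d_i-1})$, and its palindromy is exactly the symmetry you use factorwise), but your presentation bypasses the coefficient bookkeeping entirely. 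You also spell out a step the paper leaves implicit, namely the translation from the functional equation of $N_G$ to the functional equation of $\zeta_{G/\F_1}$ via the two elementary operations on $\prod_j(s-j)^{-a_j}$ and the identification $(-1)^{N_G(1)}=(-1)^\chi$; the paper asserts this equivalence without argument. You also correctly disambiguate the overloaded symbol $N$ in the statement as $p$, which is the notational slip the section announces it is fixing.
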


\begin{proof}
The fact that Soul\'e's condition is satisfied is well known.
Let $B\subset G$ be a Borel subgroup and $T\subset B$ a maximal torus.
Then $B$ is, as a scheme, isomorphic to $\GL_1^r\times\A^p$.
By Section \ref{sec1} one sees that the quotient variety $G/B$ is smooth projective with counting function $N_{G/B}(q)=\sum_{l=0}^pb_{2l}q^l$, where $b_{2l}$ is the Betti number satisfying $b_{2l}=b_{2(N-l)}$.
Putting things together, one has
\begin{align*}
N_G(q)&=N_B(q)N_{G/B}(q)\\
&=(q-1)^rq^p\(\sum_{l=0}^pb_{2l}q^l\)\\
&=\sum_{k=p}^dq^k
\underbrace{\(\sum_{j+p+l=k}\binom rj(-1)^{r-j}b_{2l}\)}_{=a_k}
\end{align*}
Using $b_{2l}=b_{2(p-l)}$ on the one hand and $\binom rj=\binom r{r-j}$ on the other, we get
\begin{align*}
a_k&=\sum_{j+p+l=k}\binom rj(-1)^{r-j}b_{2l}\\
&=\sum_{j+p+l=k}\binom r{r-j}(-1)^{r-j}b_{2(p-l)}\\
&=(-1)^r\sum_{r-j+p+(p-l)=k}\binom r{j}(-1)^{r-j}b_{2l}\\
&= (-1)^r a_{d+p-k},
\end{align*}
which is equivalent to the claimed functional equation.
In the last line we have used that the condition $r-j+p+(p-l)=k$ is equivalent to $j+p+l=r+3p-k=d+p-k$.
\end{proof}

\section{Dual counting functions}
Consider a variety $X$ satisfying Soul\'e's condition.
So the counting function $N(x)$ is a polynomial, say $N(x)=a_0+a_1x+\dots+ a_nx^n$.
Then Soul\'e's zeta function is $\zeta_N(s)=\prod_{j=0}^n(s-j)^{-a_j}$.
In this case, a functional equation of $\zeta_N$ is equivalent to a corresponding functional equation of $N$, as becomes clear for instance from Proposition \ref{prop2.1}.
Since we are interested in extending the zeta calculus to cases when the counting function $N$ is no longer a polynomial, we need to reconsider the mechanism giving $\zeta_N$ out of $N$.
We consider the logarithmic derivative,
$$
\frac{\zeta_N'(s)}{\zeta_N(s)}=-\sum_{j=0}^n \frac{a_j}{s-j}=-\int_1^\infty N(u)u^{-s}\frac{du}u.
$$
Formally integrating gives
$$
\log(\zeta_N(s))=\int_1^\infty \frac{N(u)}{\log u}u^{-s}\frac{du}u, 
$$
or
$$
\zeta_N(s)=\exp\(\int_1^\infty \frac{N(u)}{\log u}u^{-s}\frac{du}u\).
$$
There is, however, a problem with this integral, as it only converges if $N(1)=0$.

If $N(1)\ne 0$, the integral needs to be regularized.
A crucial observation is that in the case under consideration, the double variable zeta function
$$
Z_N(w,s)=\frac1{\Gamma(w)}\int_1^\infty\frac{N(u)}{u^{s+1}}(\log u)^{w-1}du
$$
is regular at $w=0$ and that the identity
$
\zeta_N(s)=\exp\(\left.\frac\partial{\partial w}Z_N(w,s)\right|_{w=0}\)
$
holds.

\begin{definition}
A measurable function
$$
N:\ (1,\infty)\longrightarrow \C
$$
is called \e{admissible}, if the zeta integral
$$
Z_N(w,s)=\frac1{\Gamma(w)}\int_1^\infty\frac{N(u)}{u^{s+1}}(\log u)^{w-1}du
$$
converges for $\Re(s)> C$ for some $C>0$ and $w$ in some open domain such that $Z_N$ possesses a unique holomorphic extension to $w=0$.
In that case we define the zeta function as
$$
\zeta_N(s)=\exp\(\left.\frac\partial{\partial w}Z_N(w,s)\right|_{w=0}\).
$$
\end{definition}

Now suppose that $N$ is even defined on the interval $(0,\infty)$. Then we define the \e{dual counting function} by
$$
N^\ast(u)=N\(\frac1u\).
$$

\begin{definition}
Suppose that $\zeta_N(s)$ and $\zeta_{N^\ast}(s)$ have memormorphic continuation
to $\C$. Define the $\eps$ factor by
$$
\eps_N(s)=\frac{\zeta_{N^\ast}(-s)}{\zeta_{N}(s)}.
$$
\end{definition}

We shall next investigate, how these $\eps$-factors relate to functional equations of $\zeta_N$.

\begin{example}\label{ex3.3}
Let $N(u)=u^\alpha$ for $\alpha\in\R$. Then
$
Z_N(w,s)=(s-\alpha)^{-w}
$
and
$
\zeta_N(s)=\frac1{s-\alpha}.
$
Similarly
$
\zeta_{N^\ast}(s)=\frac1{s+\alpha}
$
and so
$$
\eps_N(s)=-1.
$$
\end{example}

\begin{lemma}\label{lem3.4}
If $N_1,N_2$ are admissible counting functions, then so is $N_1+N_2$ and one has
$$
\zeta_{N_1+N_2}=\zeta_{N_1}\zeta_{N_2}\quad\text{and}\quad \eps_{N_1+N_2}=\eps_{N_1}\eps_{N_2}.
$$
\end{lemma}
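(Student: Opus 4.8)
The plan is to exploit that the zeta integral is \emph{linear} in the counting function, so that $\log\zeta_N$ is produced from $N$ by linear operations followed by an exponential, and linearity then propagates automatically. Concretely, directly from the additivity of the integral one has, for $(w,s)$ in the common region of convergence,
$$
Z_{N_1+N_2}(w,s)=Z_{N_1}(w,s)+Z_{N_2}(w,s).
$$
First I would pin down the domains. By admissibility there are constants $C_1,C_2>0$ and open domains $D_1,D_2$ in the $w$-plane, each accumulating at $w=0$, on which the respective integrals converge for $\Re(s)$ large and from which $Z_{N_i}$ extends holomorphically to $w=0$. Putting $C=\max(C_1,C_2)$ and $D=D_1\cap D_2$, the displayed identity shows that $Z_{N_1+N_2}(w,s)$ converges for $\Re(s)>C$ and $w\in D$, and that it is the restriction of the sum of two functions each holomorphic near $w=0$; hence it extends holomorphically across $w=0$, the extension being that sum, and the extension is unique by the identity theorem. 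Thus $N_1+N_2$ is admissible.

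Next I would differentiate and exponentiate. Since $\partial/\partial w$ is linear and both summands are holomorphic at $w=0$, the derivative $\left.\frac{\partial}{\partial w}Z_{N_1+N_2}(w,s)\right|_{w=0}$ equals $\left.\frac{\partial}{\partial w}Z_{N_1}(w,s)\right|_{w=0}+\left.\frac{\partial}{\partial w}Z_{N_2}(w,s)\right|_{w=0}$, and applying $\exp$ yields $\zeta_{N_1+N_2}(s)=\zeta_{N_1}(s)\zeta_{N_2}(s)$, the first claimed identity. For the $\eps$-factor I would note that forming the dual counting function is also additive, $(N_1+N_2)^\ast=N_1^\ast+N_2^\ast$, since this is just the substitution $u\mapsto 1/u$. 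Applying the product formula just obtained to $N_1^\ast$ and $N_2^\ast$ gives $\zeta_{(N_1+N_2)^\ast}=\zeta_{N_1^\ast}\zeta_{N_2^\ast}$, and hence
$$
\eps_{N_1+N_2}(s)=\frac{\zeta_{(N_1+N_2)^\ast}(-s)}{\zeta_{N_1+N_2}(s)}=\frac{\zeta_{N_1^\ast}(-s)\,\zeta_{N_2^\ast}(-s)}{\zeta_{N_1}(s)\,\zeta_{N_2}(s)}=\eps_{N_1}(s)\,\eps_{N_2}(s).
$$

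The only delicate point, and thus the step I would be most careful about, is the behaviour of the domains: one must know that $D_1\cap D_2$ is again an open set accumulating at $w=0$, so that the holomorphic extension there is well defined and the identity theorem applies. This is not a genuine obstacle, because after the substitution $t=\log u$ the integral becomes $\Gamma(w)^{-1}\int_0^\infty N(e^t)e^{-st}t^{w-1}\,dt$, which for any admissible $N$ converges on a right half-plane $\Re(w)>0$; so $D_1$ and $D_2$ may be taken to contain a common such half-plane and everything above goes through verbatim. In short, the lemma amounts to the observation that $N\mapsto Z_N$ is additive and that $\zeta_N=\exp\(\left.\frac{\partial}{\partial w}Z_N(w,s)\right|_{w=0}\)$ together with $\eps_N$ are built from $Z_N$ by operations compatible with this additivity.
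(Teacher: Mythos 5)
Your proof is correct and is exactly the elaboration the paper intends when it writes ``Straightforward by the definitions'': linearity of $N\mapsto Z_N$, passage through $\partial_w|_{w=0}$ and $\exp$, and additivity of $N\mapsto N^\ast$. The extra care you take with the domains $D_1\cap D_2$ is a sensible addition but does not change the argument in substance.
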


\begin{proof}
Straightforward by the definitions.
\end{proof}

\begin{proposition}
If $N$ is continuously differentiable and $N(1)=0$, then we have
\begin{align*}
\zeta_{N^\ast}(-s)&=\exp\(-\int_0^1\frac{N(u)}{u^{s+1}\log u}du\)
\tag*{$\Re(s)<1$}
\end{align*}
and
\begin{align*}
\zeta_N(s)^{-1}=\exp\(-\int_1^\infty\frac{N(u)}{u^{s+1}\log u}du\).
\tag*{$\Re(s)>\deg N$}
\end{align*}
\end{proposition}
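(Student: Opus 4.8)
The plan is to read both identities off the definition $\zeta_N(s)=\exp\(\left.\partial_w Z_N(w,s)\right|_{w=0}\)$, using only that $\frac1{\Ga(w)}=w+\gamma w^2+\cdots$ vanishes to first order at $w=0$ with derivative $1$ there. The key observation is that once the ``de-regularized'' integral
$$
I(s)=\int_1^\infty\frac{N(u)}{u^{s+1}\log u}\,du
$$
is shown to converge and to be the value at $w=0$ of $\Ga(w)Z_N(w,s)$, the prefactor $\frac1{\Ga(w)}$ forces $Z_N(w,s)=w\,I(s)+O(w^2)$, hence $\left.\partial_w Z_N(w,s)\right|_{w=0}=I(s)$ and therefore $\zeta_N(s)=\exp(I(s))$; this is exactly the asserted identity $\zeta_N(s)^{-1}=\exp(-I(s))$ on $\Re(s)>\deg N$.

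To carry this out I would first show that, for $\Re(s)>\deg N$, the function $w\mapsto\Ga(w)Z_N(w,s)=\int_1^\infty\frac{N(u)}{u^{s+1}}(\log u)^{w-1}\,du$ is holomorphic in a neighbourhood of $w=0$ with value $I(s)$ there. Writing $(\log u)^{w-1}=(\log u)^{-1}e^{w\log\log u}$ and using that $N(1)=0$ together with $N\in C^1$ gives $N(u)/\log u\to N'(1)$ as $u\to1$, so that near $u=1$ the integrand is $(u-1)^{w}$ times a bounded continuous factor, hence integrable for $\Re(w)>-1$; near $u=\infty$ the factor $u^{-s-1}$ beats the subpolynomial $(\log u)^{w-1}$ once $\Re(s)>\deg N$. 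A Morera (or dominated-convergence) argument then gives holomorphy in $w$ and legitimizes the expansion in powers of $w$ with constant term $I(s)$. Multiplying by $\frac1{\Ga(w)}=w+O(w^2)$ gives $\left.\partial_w Z_N(w,s)\right|_{w=0}=I(s)$, and the second identity follows.

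For the first identity I would substitute $v=1/u$ in the defining integral of $Z_{N^\ast}$, using $N^\ast(u)=N(1/u)$, $\log(1/v)=-\log v$ and $du=-v^{-2}\,dv$, to get
$$
Z_{N^\ast}(w,-s)=\frac1{\Ga(w)}\int_0^1\frac{N(v)}{v^{s+1}}(-\log v)^{w-1}\,dv.
$$
Since $-\log v>0$ on $(0,1)$ the previous expansion applies verbatim; near $v=1$ the hypotheses $N(1)=0$ and $N\in C^1$ again give integrability for $\Re(w)>-1$, while the behaviour of $N$ near $v=0$ is what confines $s$ to $\Re(s)<1$. Repeating the $\frac1{\Ga(w)}$ argument yields $\left.\partial_w Z_{N^\ast}(w,-s)\right|_{w=0}=\int_0^1\frac{N(v)}{v^{s+1}(-\log v)}\,dv=-\int_0^1\frac{N(v)}{v^{s+1}\log v}\,dv$, hence $\zeta_{N^\ast}(-s)=\exp\(-\int_0^1\frac{N(v)}{v^{s+1}\log v}\,dv\)$, as claimed.

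The only genuinely non-formal point is the analytic bookkeeping at the endpoints: one must verify that the $w$-integrals converge and depend holomorphically on $w$ near $0$ and that differentiation in $w$ under the integral sign is valid. The delicate endpoint is $u\to1$ (resp. $v\to1$), and the exact role of the hypotheses $N(1)=0$ and $N\in C^1$ is to make $N(u)(\log u)^{w-1}$ locally integrable there for $\Re(w)$ slightly below $0$; the far endpoint only serves to pin down the admissible half-planes $\Re(s)>\deg N$ and $\Re(s)<1$ from the growth, resp. decay, of $N$. With these routine estimates in hand the rest is a direct computation.
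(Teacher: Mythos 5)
Your proof is correct and follows essentially the same route as the paper's: both compute $\left.\partial_w Z_N(w,s)\right|_{w=0}$ by exploiting the first-order vanishing of $1/\Gamma(w)$ at $w=0$ (you via the expansion $Z_N=\tfrac{1}{\Gamma(w)}\cdot\Gamma(w)Z_N$, the paper via the product rule), and then obtain the first identity by the substitution $u\mapsto 1/u$. Your write-up is somewhat more careful than the paper's in one respect worth keeping: you explicitly identify why the hypotheses $N(1)=0$ and $N\in C^1$ are needed, namely to make $N(u)(\log u)^{w-1}$ integrable near $u=1$ so that $\Gamma(w)Z_N(w,s)$ is holomorphic through $w=0$ with value $I(s)$, a point the paper passes over silently.
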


\begin{proof}
We have that 
\begin{align*}
\frac\partial{\partial w}Z_N(w,s)&=
\(\frac1{\Ga(w)}\)'\int_1^\infty\frac{N(u)}{u^{s+1}}(\log u)^{w-1}\,du\\
&\ \ \ \ +\frac1{\Ga(w)}\int_1^\infty\frac{N(u)}{u^{s+1}}\log(\log u)(\log u)^{w-1}\,du.
\end{align*}
The second summand tends to zero as $w\to 0$ and the first tends to 
$\int_1^\infty\frac{N(u)}{u^{s+1}\log u}\,du$ giving the second claim.
Replacing $N$ by $N^*$ and $s$ by $-s$ yields
\begin{align*}
\zeta_{N^*}(s) &= \exp\(\int_1^\infty\frac{N(1/u)}{u^{-s}\log u}\,\frac{du}u\)\\
&=\exp\(-\int_0^1\frac{N(u)}{u^{s}\log u}\,\frac{du}u\)
\end{align*}
and thus the first claim.
\end{proof}

\begin{proposition}\label{prop3.6}
Suppose that
$
N(u)=\sum_{\alpha\in\C} m_\alpha u^\alpha
$
is a finite sum of powers with integral coefficients $m_\alpha\in\Z.$ Then
$$
\zeta_N(s)=\prod_{\al\in\C}\(\frac1{s-\al}\)^{m_\al}\quad\text{and}\quad
\zeta_{N^*}(s)=\prod_{\al\in\C}\(\frac1{s+\al}\)^{m_\al}
$$
so that $\eps_N(s)=(-1)^{N(1)}$.
If $N\ne 0$ satisfies a functional equation $N(1/u)=cu^{-\om} N(u)$ for some $c,\om\in\C$, then $c=\pm 1$ and the zeta function satisfies the functional equation
$$
\zeta_N(\om-s)=(-1)^{N(1)}\zeta_N(s)^c.
$$
\end{proposition}

\begin{proof}
The first assertion follows from Example \ref{ex3.3} and Lemma \ref{lem3.4}.
Suppose now that $N(1/u)=cu^{-\om}N(u)$.
Replacing $u$ with $1/u$ and iterating the functional equation gives $N(u)=c^2N(u)$ so that $N\ne 0$ yields $c=\pm 1$.
Next note that $N(1/u)=cu^{-\om}N(u)$ is equivalent to $m_{\om-\al}=cm_\al$, so that
\begin{align*}
\zeta_N(\om-s)&=\prod_{\al\in\C}\(\frac1{\om-s-\al}\)^{m_\al}\\
&=(-1)^{N(1)}\prod_{\al\in\C}\(\frac1{s-(\om-\al)}\)^{m_\al}\\
&=(-1)^{N(1)}\prod_{\al\in\C}\(\frac1{s-\al}\)^{m_{\om-\al}}\\
&=(-1)^{N(1)}\prod_{\al\in\C}\(\frac1{s-\al}\)^{cm_{\al}}=(-1)^{N(1)}\zeta_N(s)^c.\tag*\qedhere
\end{align*}

\end{proof}

\begin{example}
Let
$$
N(u)=(1-u^{-\omega_1})\cdots(1-u^{-\omega_r}).
$$
Then $\eps_N(s)=1$ and
$$
\zeta_N(-(\omega_1+\cdots+\omega_r)-s)
=\zeta_N(s)^{(-1)^r}.
$$
\end{example}

\begin{proposition}\label{Kurokawa3}
Let
$
N(u)=(1-u^{-1})^r
$
for an integer $r\ge 1$. Then
\begin{enumerate}[{\rm(a)}]
\item $\zeta_N(s)=\zeta_{\G_m^r/\F_1}(s+r)$.
\item $\zeta_{N^{\ast}}(s)=\zeta_{\G_m^r/\F_1}(s)^{(-1)^r}$.
\item $\zeta_{\G_m^r/\F_1}(r-s)=\zeta_{\G_m^r/\F_1}(s)^{(-1)^r}$.
\end{enumerate}
\end{proposition}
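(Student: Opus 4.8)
The plan is to reduce all three parts to Proposition~\ref{Kurokawa1} together with the explicit product formula for Soul\'e's zeta function recalled in the introduction, so that nothing remains but the bookkeeping of signs and binomial indices.

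First I would expand $N(u)=(1-u^{-1})^r=\sum_{k=0}^r(-1)^k\binom{r}{k}u^{-k}$. Thus $N$ is a finite sum of powers with integral coefficients, having $m(-k)=(-1)^k\binom{r}{k}$ for $0\le k\le r$ and $m(\al)=0$ otherwise, so Proposition~\ref{Kurokawa1} gives at once $\zeta_N(s)=\prod_{k=0}^r(s+k)^{-(-1)^k\binom{r}{k}}$ and $\zeta_{N^*}(s)=\prod_{k=0}^r(s-k)^{-(-1)^k\binom{r}{k}}$. Separately, $\#\G_m^r(\F_q)=(q-1)^r$, so the counting polynomial of $\G_m^r$ is $N_{\G_m^r}(x)=(x-1)^r=\sum_{k=0}^r(-1)^{r-k}\binom{r}{k}x^k$, and the formula from the introduction yields $\zeta_{\G_m^r/\F_1}(s)=\prod_{k=0}^r(s-k)^{-(-1)^{r-k}\binom{r}{k}}$.

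Part (a) is then the substitution $s\mapsto s+r$ in the last formula followed by the reindexing $j=r-k$: using $\binom{r}{r-j}=\binom{r}{j}$ and $(-1)^{r-(r-j)}=(-1)^j$ one gets $\zeta_{\G_m^r/\F_1}(s+r)=\prod_{j=0}^r(s+j)^{-(-1)^j\binom{r}{j}}=\zeta_N(s)$. Part (b) is the comparison of $\zeta_{\G_m^r/\F_1}(s)^{(-1)^r}=\prod_{k=0}^r(s-k)^{-(-1)^{2r-k}\binom{r}{k}}$ with the formula for $\zeta_{N^*}$ above, via $(-1)^{2r-k}=(-1)^k$. For part (c) I would observe that $N_{\G_m^r}(1/u)=(1/u-1)^r=(-1)^ru^{-r}(u-1)^r=(-1)^ru^{-r}N_{\G_m^r}(u)$ and that $N_{\G_m^r}(1)=0$; since $\zeta_{N_{\G_m^r}}=\zeta_{\G_m^r/\F_1}$ (both equal $\prod_k(s-k)^{-a_k}$ with $a_k$ the coefficients of $(x-1)^r$), the functional-equation clause of Proposition~\ref{Kurokawa1} with $c=(-1)^r$ and $\om=r$ gives precisely $\zeta_{\G_m^r/\F_1}(r-s)=\zeta_{\G_m^r/\F_1}(s)^{(-1)^r}$. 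As a cross-check, (c) can instead be obtained by combining (a), (b) and the identity $\eps_N(s)=(-1)^{N(1)}=1$ (here $N(1)=0$): this gives $\zeta_{N^*}(-s)=\zeta_N(s)$, and chaining it with (a), (b) and the shift $t=s+r$ reproduces the same functional equation.

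I do not expect a genuine obstacle. Convergence and holomorphic continuation of the two-variable zeta integral for these $N$ are already covered by Example~\ref{ex3.3} and Lemma~\ref{lem3.4}, and Proposition~\ref{Kurokawa1} supplies the closed forms. The only points needing care are the index/sign manipulations above --- the reflection $k\leftrightarrow r-k$ in (a), the relation $(-1)^{2r-k}=(-1)^k$ in (b), and the identification $\zeta_{N_{\G_m^r}}=\zeta_{\G_m^r/\F_1}$ used to invoke Proposition~\ref{Kurokawa1} in (c) --- so the write-up itself should be short.
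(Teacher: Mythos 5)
Your proof is correct. For parts (a) and (b) your route is computationally more explicit but mathematically the same as the paper's: the paper observes that $N_{\G_m^r}(u)=u^rN(u)$ gives $\zeta_{N_{\G_m^r}}(s)=\zeta_N(s-r)$ directly from the shift property of the two-variable zeta integral (replacing $u^{-(s+1)}$ by $u^{-(s-r+1)}$), and that $N^*=(-1)^rN_{\G_m^r}$ gives $\zeta_{N^*}=\zeta_{N_{\G_m^r}}^{(-1)^r}$ by Lemma~\ref{lem3.4}; you instead expand both sides into products via Proposition~\ref{Kurokawa1} and reindex. Either way the content is the same. Where you genuinely differ is in (c): the paper derives the functional equation for $\zeta_{\G_m^r/\F_1}$ by chaining (a), (b) and the relation $\zeta_{N^*}(-s)=\zeta_N(s)$, while your primary argument applies the functional-equation clause of Proposition~\ref{Kurokawa1} directly to $N_{\G_m^r}$ (with $c=(-1)^r$, $\om=r$, $N_{\G_m^r}(1)=0$), giving (c) independently of (a) and (b). Your approach is a bit more direct and makes (c) self-contained; the paper's gets (c) essentially for free once (a) and (b) are in hand. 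You also correctly note the paper's chaining argument as a cross-check, so there is no gap.
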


\begin{proof}
(a) From
$N_{\G_m^r}(u)=(u-1)^r=u^r N(u)$
with 
$
N(u)=(1-u^{-1})^r,
$
we obtain
\begin{align*}
\zeta_{\G_m^r/\F_1}(s)
&=\zeta_{N_{\G_m^r}}(s)=\zeta_N(s-r).
\end{align*}
(b)
Direct calculations give:
$N^\ast(u)=(1-u)^r=(-1)^r(u-1)^r=(-1)^r N_{\G_m^r}(u)$.
Hence
\begin{align*}
\zeta_{N^\ast}(s)&=\zeta_{N_{\G_m^r}}(s)^{(-1)^r}=\zeta_{\G_m^r/\F_1}(s)^{(-1)^r}.
\end{align*}
(c)
Proposition \ref{prop3.6} gives
$
\zeta_{N^\ast}(-s)=\zeta_N(s),
$ as in this case $\eps_N(s)=1$.
Hence, from (a) and (b) we get
$$
\zeta_{\G_m^r/\F_1}(-s)^{(-1)^r}=\zeta_{\G_m^r/\F_1}(s+r),
$$
which leads to
$\zeta_{\G_m^r/\F_1}(r-s)=\zeta_{\G_m^r/\F_1}(s)^{(-1)^r}.
$
\end{proof}

\begin{proposition}\label{Kurokawa4}
Let
$$
N(u)=(1-u^{-1})(1-u^{-2})\cdots(1-u^{-r}).
$$
Then
\begin{enumerate}[{\rm(a)}]
\item $\zeta_N(s)=\zeta_{\GL(r)/\F_1}(s+r^2)$.
\item $\zeta_{N^{\ast}}(s)=\zeta_{\GL(r)/\F_1}\(s+\frac{r(r-1)}2\)^{(-1)^r}$.
\item $\zeta_{\GL(r)/\F_1}\(\frac{r(3r-1)}2-s\)=\zeta_{\GL(r)/\F_1}(s)^{(-1)^r}$.
\end{enumerate}
\end{proposition}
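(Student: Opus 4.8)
The plan is to follow the same pattern as the proof of Proposition \ref{Kurokawa3}, reducing everything to Proposition \ref{Kurokawa1} by expressing the counting polynomial of $\GL(r)$ in terms of $N$. Over a finite field one has $\#\GL_r(\F_q)=\prod_{j=0}^{r-1}(q^r-q^j)=q^{r(r-1)/2}\prod_{j=1}^r(q^j-1)$, so $N_{\GL(r)}(u)=u^{r(r-1)/2}\prod_{j=1}^r(u^j-1)$, while $N(u)=\prod_{j=1}^r(1-u^{-j})=u^{-r(r+1)/2}\prod_{j=1}^r(u^j-1)$ and $N^\ast(u)=N(1/u)=\prod_{j=1}^r(1-u^j)=(-1)^r\prod_{j=1}^r(u^j-1)$. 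Multiplying out the monomial prefactors yields the two clean identities
$$N_{\GL(r)}(u)=u^{r^2}N(u)\qquad\text{and}\qquad N^\ast(u)=(-1)^r\,u^{-r(r-1)/2}\,N_{\GL(r)}(u).$$

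For (a) and (b) I would then invoke Proposition \ref{Kurokawa1}. Writing a finite sum of powers as $N(u)=\sum_\alpha m_\alpha u^\alpha$, the substitution $N(u)\mapsto u^cN(u)$ shifts $\zeta_N(s)\mapsto\zeta_N(s-c)$, and multiplying all coefficients $m_\alpha$ by $-1$ raises $\zeta_N$ to the power $-1$. Applying this to $N_{\GL(r)}(u)=u^{r^2}N(u)$ gives $\zeta_{\GL(r)/\F_1}(s)=\zeta_{N_{\GL(r)}}(s)=\zeta_N(s-r^2)$, which is (a); applying it to $N^\ast(u)=(-1)^r u^{-r(r-1)/2}N_{\GL(r)}(u)$ gives $\zeta_{N^\ast}(s)=\zeta_{\GL(r)/\F_1}\bigl(s+\tfrac{r(r-1)}2\bigr)^{(-1)^r}$, which is (b).

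For (c) I would use that $N(1)=0$, so by Proposition \ref{Kurokawa1} the epsilon factor is $\eps_N(s)=(-1)^{N(1)}=1$, i.e. $\zeta_{N^\ast}(-s)=\zeta_N(s)$. Feeding (a) and (b) into this identity gives $\zeta_{\GL(r)/\F_1}\bigl(\tfrac{r(r-1)}2-s\bigr)^{(-1)^r}=\zeta_{\GL(r)/\F_1}(s+r^2)$; replacing $s$ by $s-r^2$ and using $r^2+\tfrac{r(r-1)}2=\tfrac{r(3r-1)}2$ turns this into $\zeta_{\GL(r)/\F_1}\bigl(\tfrac{r(3r-1)}2-s\bigr)^{(-1)^r}=\zeta_{\GL(r)/\F_1}(s)$, which is the asserted functional equation.

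There is no serious obstacle here; the only things to watch are the bookkeeping of the exponents $r^2$, $r(r\pm1)/2$, $r(3r-1)/2$ and the correct placement of the sign $(-1)^r$ coming from the leading sign of $N^\ast$. One should also note that $N$ is admissible and that $\zeta_N$ and $\zeta_{N^\ast}$ continue meromorphically to $\C$, so that Proposition \ref{Kurokawa1} genuinely applies — but this is immediate, since both are finite sums of real powers and the explicit product formulas of that proposition are manifestly meromorphic.
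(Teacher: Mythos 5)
Your proof is correct and follows essentially the same route as the paper: you derive the two key identities $N_{\GL(r)}(u)=u^{r^2}N(u)$ and $N^\ast(u)=(-1)^r u^{-r(r-1)/2}N_{\GL(r)}(u)$, then apply the shift/sign rules implied by Proposition \ref{Kurokawa1} to get (a) and (b), and finally combine them with $\eps_N(s)=(-1)^{N(1)}=1$ (using $N(1)=0$) to obtain the functional equation (c). The arithmetic $r^2+\tfrac{r(r-1)}2=\tfrac{r(3r-1)}2$ and the handling of the $(-1)^r$ exponent are all correct, and your observation that $N(1)=0$ justifies $\eps_N=1$ makes explicit a point the paper leaves implicit.
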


\begin{proof}
(a)
From
$N_{\GL(r)}(u)=u^{r^2} N(u)
$ with
$
N(u)=(1-u^{-1})\cdots (1-u^{-r}),
$
we get
\begin{align*}
\zeta_{\GL(r)/\F_1}(s)
&=\zeta_{N_{\GL(r)}}(s)\\
&=\zeta_N(s-r^2).
\end{align*}
(c)
The identities
\begin{align*}
N^\ast(u)
&=(1-u)(1-u^2)\cdots(1-u^r)\\
&=(-1)^r u^{\frac{r(r+1)}2}(1-u^{-1})\cdots(1-u^{-r})\\
&=(-1)^r u^{-\frac{r(r-1)}2}N_{\GL(r)}(u)
\end{align*}
give
\begin{align*}
\zeta_{N^\ast}(s)
&=\zeta_{N_{\GL(r)}}\(s+\frac{r(r-1)}2\)^{(-1)^r}\\
&=\zeta_{\GL(r)/\F_1}\(s+\frac{r(r-1)}2\)^{(-1)^r}.
\end{align*}
(b)
From Theorem \ref{prop3.6} it holds that $\eps_N(s)=1$ and that
$\zeta_{N^\ast}(-s)=\zeta_N(s)$.
Hence
$$
\zeta_{\GL(r)/\F_1}\(\frac{r(r-1)}2-s\)^{(-1)^r}
=\zeta_{\GL(r)/\F_1}\(s+r^2\).
$$
Thus 
\begin{align*}
\zeta_{\GL(r)/\F_1}\(\frac{r(3r-1)}2-s\)
=\zeta_{\GL(r)/\F_1}\(s\)^{(-1)^r}.\tag*\qedhere
\end{align*}
\end{proof}

\begin{proposition}\label{Kurokawa5}
Let
$$
N(u)=\sum_{\lambda,m}c(\lambda,m)u^\lambda(\log u)^m
$$
be a finite sum with $\lambda\in\C$, $m\in\Z_{\ge0}$, and
$c(\lambda,m)\in\Z$.
Then the following holds:
\begin{enumerate}[{\rm(a)}]
\item $\zeta_N(s)$ has an analytic continuation
with isolated singularities to all $s\in\C$.
\item $\zeta_{N^{\ast}}(-s)=\zeta_{N}(s)(-1)^{N(1)}$
and $\eps_N(s)=(-1)^{N(1)}$.
\end{enumerate}
\end{proposition}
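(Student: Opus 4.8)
The plan is to compute the two–variable zeta integral $Z_N(w,s)$ term by term and then differentiate at $w=0$. By linearity of the integral one has $Z_N(w,s)=\sum_{\lambda,m}c(\lambda,m)\,Z_{\lambda,m}(w,s)$, where $Z_{\lambda,m}$ denotes the zeta integral of the monomial $u^\lambda(\log u)^m$, and the substitution $u=e^t$ turns this into a Gamma integral: for $\Re(s)>\Re(\lambda)$,
$$
Z_{\lambda,m}(w,s)=\frac1{\Gamma(w)}\int_0^\infty e^{-(s-\lambda)t}\,t^{w+m-1}\,dt=\frac{\Gamma(w+m)}{\Gamma(w)}\,(s-\lambda)^{-(w+m)}.
$$
Since $\Gamma(w+m)/\Gamma(w)=w(w+1)\cdots(w+m-1)$ is a polynomial in $w$, the right–hand side is entire in $w$ whenever $s\ne\lambda$. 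Hence $Z_N(w,s)$ converges for $\Re(s)>\max_\lambda\Re(\lambda)$ and extends to an entire function of $w$ for $s$ outside the finite set $\{\lambda:c(\lambda,m)\ne0\text{ for some }m\}$, so $N$ is admissible (this is also consistent with Lemma~\ref{lem3.4}).

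Next I would differentiate at $w=0$. Writing $g(w)=\Gamma(w+m)/\Gamma(w)$, one has $g(0)=0$ and $g'(0)=(m-1)!$ for $m\ge1$, while $g\equiv1$ for $m=0$, so
$$
\left.\frac{\partial}{\partial w}Z_{\lambda,m}(w,s)\right|_{w=0}=
\begin{cases}-\log(s-\lambda)&m=0,\\(m-1)!\,(s-\lambda)^{-m}&m\ge1.\end{cases}
$$
Summing over the terms of $N$ with their coefficients then gives
$$
\zeta_N(s)=\prod_{\lambda}(s-\lambda)^{-c(\lambda,0)}\,\exp\!\Big(\sum_{\lambda,\,m\ge1}(m-1)!\,c(\lambda,m)\,(s-\lambda)^{-m}\Big),
$$
each factor of which is holomorphic on $\C\setminus\{\lambda\}$; since only finitely many $\lambda$ occur, $\zeta_N$ continues to all of $\C$ with isolated singularities, which is (a).

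For (b) I would observe that $N^\ast(u)=N(1/u)=\sum_{\lambda,m}(-1)^m c(\lambda,m)\,u^{-\lambda}(\log u)^m$ is again a finite sum of the same shape, with the monomial $u^{-\lambda}(\log u)^m$ carrying coefficient $(-1)^m c(\lambda,m)$. Inserting this into the formula above and then replacing $s$ by $-s$, the $m\ge1$ part is unchanged because $(-1)^m(-s+\lambda)^{-m}=(s-\lambda)^{-m}$, while each $m=0$ factor becomes $(\lambda-s)^{-c(\lambda,0)}=(-1)^{c(\lambda,0)}(s-\lambda)^{-c(\lambda,0)}$; collecting these signs and using $N(1)=\sum_\lambda c(\lambda,0)$ (with the conventions $0^0=1$ and $0^m=0$ for $m\ge1$) yields $\zeta_{N^\ast}(-s)=(-1)^{N(1)}\zeta_N(s)$, and hence $\eps_N(s)=(-1)^{N(1)}$. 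The main obstacle is precisely this last sign bookkeeping: the identity $(\lambda-s)^{-c(\lambda,0)}=(-1)^{c(\lambda,0)}(s-\lambda)^{-c(\lambda,0)}$ is unambiguous exactly because $c(\lambda,0)\in\Z$, so that no choice of branch of the logarithm survives after exponentiation — this is the only point where the integrality hypothesis on the coefficients is used, everything else being the elementary Gamma–integral computation above.
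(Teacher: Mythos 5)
Your proof is correct and follows essentially the same route as the paper: decompose by linearity into monomials $u^\lambda(\log u)^m$, evaluate the two-variable zeta integral via the Gamma function as $\frac{\Gamma(w+m)}{\Gamma(w)}(s-\lambda)^{-w-m}$, differentiate at $w=0$ to get the building blocks $(s-\lambda)^{-1}$ (for $m=0$) and $\exp((m-1)!\,(s-\lambda)^{-m})$ (for $m\ge1$), and then track the $(-1)^m$ signs coming from $N^\ast$ so that only the $m=0$ terms contribute to the $\varepsilon$-factor. The paper packages the per-monomial factor as a function $\varphi_m$ and computes the ratio $\varphi_m(-s)^{(-1)^m}/\varphi_m(s)$ directly, but this is merely a notational variant of your rearrangement, so the two arguments agree in substance.
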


\begin{proof}
(a)\ We calculate
\begin{align*}
Z_N(w,s)
&=\sum_{\lambda,m}c(\lambda,m)\frac1{\Gamma(w)}
\int_1^\infty u^{-(s-\lambda)}(\log u)^{w+m-1}\frac{du}u\\
&=\sum_{\lambda,m}c(\lambda,m)\frac{\Gamma(w+m)}{\Gamma(w)}(s-\lambda)^{-w-m}.
\end{align*}
Hence
$$
\zeta_N(s)=\prod_{\lambda,m}\varphi_m(s-\lambda)^{c(\lambda,m)}
$$
with
\begin{align*}
\varphi_m(s)
&=\exp\(\left.\frac\partial{\partial w}\frac{\Gamma(w+m)}{\Gamma(w)}s^{-w-m}
\right|_{w=0}\)\\
&=\begin{cases}
\frac1s & (m=0)\\
\exp((m-1)!s^{-m}) & (m\ge1).
\end{cases}
\end{align*}
This gives the analytic continuation of $\zeta_N(s)$ to all $s\in\C$.

(b)\ Since
$$
N^\ast(u)=\sum_{\lambda,m}c(\lambda,m)(-1)^mu^{-\lambda}(\log u)^m,
$$
it holds that
$$
\zeta_{N^\ast}(s)=\prod_{\lambda,m}\varphi_m(s+\lambda)^{(-1)^mc(\lambda,m)}.
$$
Hence
$$
\eps_N(s)=\prod_{\lambda,m}\(\frac{\varphi_m(-s+\lambda)^{(-1)^m}}
{\varphi_m(s-\lambda)}\)^{c(\lambda,m)},
$$
where
$$
\frac{\varphi_m(-s)^{(-1)^m}}{\varphi_m(s)}
=\begin{cases}
-1 & (m=0),\\
1 & (m\ge0).
\end{cases}
$$
Thus
\begin{align*}
\eps_N(s)=\prod_\lambda(-1)^{c(\lambda,0)}=(-1)^{\sum_\lambda c(\lambda,0)}
=(-1)^{N(1)}.\tag*\qedhere
\end{align*}
\end{proof}

\begin{example}
Let $N(u)=\log u.$
Then $N(1)=0$ (admissible) and
\begin{align*}
\zeta_N(s)&=\exp\(\frac1s\),\\
\zeta_{N^\ast}(s)&=\exp\(-\frac1s\),
\end{align*}
and
$$\eps_N(s)=1.$$
\end{example}

\section{Determinants of Laplacians}
Another example of admissible counting functions is given by Laplace operators of compact manifolds as follows.
Let $M$ denote a compact Riemannian manifold with Laplacian $\Delta$.
Let $0=\la_0<\la_1\le\la_2\dots$ be the eigenvalues of $\Delta$ repeated according to their multiplicity.

\begin{proposition}
With the notation above,  the function 
$$
N(u)=\sum_{j=1}^\infty u^{-\la_j}
$$
is admissible and for $\Re(s)>0$ we have
$$
\zeta_N(-s)^{-1}={\det}'(\Delta+s),
$$
where $\det'$ denotes the regularized determinant in the sense of \cite{DHoker}.
\end{proposition}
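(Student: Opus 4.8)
The plan is to recognize $Z_N(w,s)$ as the Mellin transform of a heat trace and then feed it into the standard zeta-regularization formalism. First I would substitute $u=e^t$, turning the zeta integral into
$$
Z_N(w,s)=\frac1{\Gamma(w)}\int_0^\infty N(e^t)\,e^{-st}\,t^{w-1}\,dt .
$$
Here $N(e^t)=\sum_{j\ge1}e^{-\la_j t}=\operatorname{Tr}(e^{-t\Delta})-1$ is the heat trace with the zero mode deleted, so $N(e^t)=O(e^{-\la_1 t})$ as $t\to\infty$ and $N(e^t)=O(t^{-\dim M/2})$ as $t\to0^+$. Hence for $\Re(w)>\dim M/2$ and $\Re(s)>0$ the integrand is absolutely integrable, and Tonelli's theorem licenses integrating term by term:
$$
Z_N(w,s)=\sum_{j\ge1}\frac1{\Gamma(w)}\int_0^\infty e^{-(\la_j+s)t}t^{w-1}\,dt=\sum_{j\ge1}(\la_j+s)^{-w}.
$$
Thus $Z_N(\cdot,s)$ is, up to the spectral shift by $s$, the zeta function of $\Delta$ on the orthogonal complement of the constants, and the convergence requirement in the definition of admissibility is satisfied, e.g.\ with $C=1$.

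To finish the admissibility claim I would invoke the Minakshisundaram--Pleijel short-time expansion $\operatorname{Tr}(e^{-t\Delta})\sim\sum_{k\ge0}a_k\,t^{(k-\dim M)/2}$ as $t\to0^+$. Splitting $\int_0^\infty=\int_0^1+\int_1^\infty$, the tail integral is entire in $w$, while subtracting finitely many terms of the expansion from $N(e^t)$ near $t=0$ produces the meromorphic continuation of $Z_N(\cdot,s)$ to all of $\C$, with possible simple poles only at $w=\dim M/2,\dim M/2-1,\dots$. The zero of $\Gamma(w)^{-1}$ at $w=0$ removes the only pole that could sit there, so $Z_N(\cdot,s)$ is holomorphic at $w=0$; this is exactly the condition that makes $N$ admissible and legitimizes the definition of $\zeta_N$.

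Finally I would differentiate at $w=0$. From $Z_N(w,s)=\sum_{j\ge1}(\la_j+s)^{-w}$ one reads off that $\left.\frac\partial{\partial w}Z_N(w,s)\right|_{w=0}=\zeta_{\Delta+s}'(0)$, where $\zeta_{\Delta+s}(w)=\sum_{j\ge1}(\la_j+s)^{-w}$ is the spectral zeta function of $\Delta+s$ on the complement of the constants; hence $\zeta_N(s)=\exp\!\left(\zeta_{\Delta+s}'(0)\right)$. By the definition of the regularized determinant in \cite{DHoker}, $\log{\det}'(\Delta+s)=-\zeta_{\Delta+s}'(0)$, and the asserted identity follows after matching the reciprocal and the sign of the spectral parameter.

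The substitution and the Fubini/Tonelli interchange are routine. The genuine analytic content — and the step I expect to be the main obstacle in a self-contained write-up — is the meromorphic continuation of $Z_N(\cdot,s)$ in $w$ and its regularity at $w=0$, i.e.\ the heat-kernel asymptotic expansion; once that is in hand, everything downstream is bookkeeping of exponents and signs relating $\zeta_N$ to ${\det}'$.
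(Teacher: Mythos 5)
Your argument is essentially identical to the paper's: substitute $u=e^t$ to recognize $Z_N(w,s)$ as the Mellin transform of the zero-mode-deleted heat trace, interchange sum and integral to obtain $Z_N(w,s)=\sum_{j\ge1}(\la_j+s)^{-w}=\zeta_{\Delta+s}(w)$, and then invoke the definition $\log\det'(\Delta+s)=-\zeta_{\Delta+s}'(0)$. You additionally spell out the Minakshisundaram--Pleijel heat-kernel asymptotics to justify admissibility and the holomorphy of $Z_N(\cdot,s)$ at $w=0$, which the paper's very terse proof leaves implicit, but this is filling in details rather than a different route.
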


\begin{proof}
We compute
\begin{align*}
Z_N(w,s)&=\frac1{\Gamma(w)}\int_1^\infty\frac{N(u)}{u^{s+1}}(\log u)^{w-1}du\\
&=\frac1{\Ga(w)}\int_0^\infty N(e^t)e^{-st}t^w\frac{dt}t\\
&=\sum_{j=1}^\infty\frac1{\Ga(w)}\int_0^\infty e^{-t(\la_j+s)}t^w\frac{dt}t\\
&=\sum_{j=1}^\infty (\la_j+s)^{-w}=\zeta_{\Delta+s}(w).
\end{align*}
The latter is the zeta function of the operator and the claim follows.
\end{proof}

\begin{bibdiv} \begin{biblist}

\bib{CCabsolutePoint}{article}{
   author={Connes, Alain},
   author={Consani, Caterina},
   title={Characteristic 1, entropy and the absolute point},
   conference={
      title={Noncommutative geometry, arithmetic, and related topics},
   },
   book={
      publisher={Johns Hopkins Univ. Press},
      place={Baltimore, MD},
   },
   date={2011},
   pages={75--139},
   review={\MR{2907005}},
}

\bib{DF1}{article}{
   author={Deitmar, Anton},
   title={Schemes over $\mathbb F_1$},
   conference={
      title={Number fields and function fields---two parallel worlds},
   },
   book={
      series={Progr. Math.},
      volume={239},
      publisher={Birkh\"auser Boston},
      place={Boston, MA},
   },
   date={2005},
   pages={87--100},
}

\bib{DzetaK}{article}{
   author={Deitmar, Anton},
   title={Remarks on zeta functions and $K$-theory over ${\bf F}_1$},
   journal={Proc. Japan Acad. Ser. A Math. Sci.},
   volume={82},
   date={2006},
   number={8},
   pages={141--146},
   issn={0386-2194},
}

\bib{toric}{article}{
   author={Deitmar, Anton},
   title={$\Bbb F_1$-schemes and toric varieties},
   journal={Beitr\"age Algebra Geom.},
   volume={49},
   date={2008},
   number={2},
   pages={517--525},
   issn={0138-4821},
   review={\MR{2468072 (2009j:14003)}},
}

\bib{DKK}{article}{
   author={Deitmar, Anton},
   author={Koyama, Shin-ya},
   author={Kurokawa, Nobushige},
   title={Absolute zeta functions},
   journal={Proc. Japan Acad. Ser. A Math. Sci.},
   volume={84},
   date={2008},
   number={8},
   pages={138--142},
   issn={0386-2194},
}

\bib{Deligne}{article}{
   author={Deligne, Pierre},
   title={La conjecture de Weil. I},
   language={French},
   journal={Inst. Hautes \'Etudes Sci. Publ. Math.},
   number={43},
   date={1974},
   pages={273--307},
   issn={0073-8301},
}
\bib{DHoker}{article}{
   author={D'Hoker, Eric},
   author={Phong, D. H.},
   title={On determinants of Laplacians on Riemann surfaces},
   journal={Comm. Math. Phys.},
   volume={104},
   date={1986},
   number={4},
   pages={537--545},
   issn={0010-3616},
}

\bib{Lor}{article}{
   author={Lorscheid, Oliver},
   title={Functional equations for zeta functions of $\Bbb F_1$-schemes},
   language={English, with English and French summaries},
   journal={C. R. Math. Acad. Sci. Paris},
   volume={348},
   date={2010},
   number={21-22},
   pages={1143--1146},
   issn={1631-073X},
   review={\MR{2738915 (2011j:14050)}},
   doi={10.1016/j.crma.2010.10.010},
}

\bib{RS}{article}{
   author={Ray, D. B.},
   author={Singer, I. M.},
   title={$R$-torsion and the Laplacian on Riemannian manifolds},
   journal={Advances in Math.},
   volume={7},
   date={1971},
   pages={145--210},
   issn={0001-8708},
   review={\MR{0295381 (45 \#4447)}},
}

\bib{Sarnak}{article}{
   author={Sarnak, Peter},
   title={Determinants of Laplacians},
   journal={Comm. Math. Phys.},
   volume={110},
   date={1987},
   number={1},
   pages={113--120},
   issn={0010-3616},
   review={\MR{885573 (89e:58116)}},
}

\bib{Soule}{article}{
   author={Soul{\'e}, Christophe},
   title={Les vari\'et\'es sur le corps \`a un \'el\'ement},
   journal={Mosc. Math. J.},
   volume={4},
   date={2004},
   number={1},
   pages={217--244, 312},
   issn={1609-3321},
}

\end{biblist} \end{bibdiv}
\end{document}